\definecolor{mygray}{gray}{0.85}
\renewcommand{\leq}{\leqslant}
\renewcommand{\geq}{\geqslant}
\renewcommand{\trianglelefteq}{\trianglelefteqslant}
\def\subsection{\@startsection{subsection}{3}%
  \z@{.5\linespacing\@plus.7\linespacing}{.3\linespacing}%
  {\bfseries\centering}}
\def\subsubsection{\@startsection{subsubsection}{3}%
  \z@{.5\linespacing\@plus.7\linespacing}{.3\linespacing}%
  {\centering}}
\def\myfnt{\ifx\protect\@typeset@protect\expandafter\footnote\else\expandafter\@gobble\fi}
\newtheorem{theorem}{Theorem}
\newtheorem{corollary}[theorem]{Corollary}
\newtheorem{definition}[theorem]{Definition}
\newtheorem{lemma}[theorem]{Lemma}
\newtheorem{proposition}[theorem]{Proposition}
\newtheorem{convention}[theorem]{Convention}
\newtheorem{fact}[theorem]{Fact}
\newtheorem{definition_proposition}[theorem]{Definition/Proposition}
\newtheorem{notation}[theorem]{Notation}
\newcounter{claimcounter}
\numberwithin{claimcounter}{theorem}
\begin{document}

\begin{abstract} 
We complement the characterization of the graph products of cyclic groups $G(\Gamma, \mathfrak{p})$ admitting a Polish group topology of \cite{shelah&paolini} with the following result. Let $G = G(\Gamma, \mathfrak{p})$, then the following are equivalent:
\begin{enumerate}[(i)]
	\item there is a metric on $\Gamma$ which induces a separable topology in which $E_{\Gamma}$ is closed;
	\item $G(\Gamma, \mathfrak{p})$ is embeddable into a Polish group;
	\item $G(\Gamma, \mathfrak{p})$ is embeddable into a non-Archimedean Polish group.
	\end{enumerate}
We also construct left-invariant separable group ultrametrics for $G = G(\Gamma, \mathfrak{p})$ and $\Gamma$ a closed graph on the Baire space, which is of independent interest.
\end{abstract}

\title{Group Metrics for Graph Products of Cyclic Groups}
\thanks{Partially supported by European Research Council grant 338821. No. F1668 on Shelah's publication list.}

\author{Gianluca Paolini}
\address{Einstein Institute of Mathematics,  The Hebrew University of Jerusalem, Israel}

\author{Saharon Shelah}
\address{Einstein Institute of Mathematics,  The Hebrew University of Jerusalem, Israel \and Department of Mathematics,  Rutgers University, U.S.A.}

\maketitle


\section{Introduction}

	\begin{definition}\label{def_cyclic_prod} Let $\Gamma = (V, E)$ be a graph and $\mathfrak{p}: V \rightarrow \{ p^n : p \text{ prime, } n \geq 1 \} \cup \{ \infty \}$ a graph coloring. We define a group $G(\Gamma, \mathfrak{p})$ with the following presentation:
	$$ \langle V \mid a^{\mathfrak{p}(a)} = 1, \; bc = cb : \mathfrak{p}(a) \neq \infty \text{ and }  b E c \rangle.$$
\end{definition}
We call the group $G(\Gamma, \mathfrak{p})$ the {\em $\Gamma$-product\footnote{Notice that this is consistent with the general definition of graph products of groups from \cite{green}. In fact every graph product of cyclic groups can be represented as $G(\Gamma, \mathfrak{p})$ for some $\Gamma$ and $\mathfrak{p}$ as above.} of the cyclic groups} $\{ C_{\mathfrak{p}(v)} : v \in \Gamma \}$, or simply the {\em graph product of} $(\Gamma, \mathfrak{p})$. These groups have received much attention in combinatorial and geometric group theory. In \cite{shelah&paolini} the authors characterized the graph products of cyclic groups admitting a Polish group topology, showing that $G$ has to have the form $G_1 \oplus G_2$ with $G_1$ a countable graph product of cyclic groups and $G_2$ a direct sum of finitely many continuum sized vector spaces over a finite field. In the present study we complement the work of \cite{shelah&paolini} with the following results: 

	\begin{theorem}\label{theorem1} Let $\Gamma = (\omega^{\omega}, E)$ be a graph and $\mathfrak{p}: V \rightarrow \{ p^n : p \text{ prime, } n \geq 1 \} \cup \{ \infty \}$ a graph coloring. Suppose further that $E$ is closed in the Baire space $\omega^{\omega}$, and that $\mathfrak{p}(\eta)$ depends only on $\eta(0)$. Then $G = G(\Gamma, \mathfrak{p})$ admits a left-invariant separable group ultrametric extending the standard metric on the Baire space.
\end{theorem}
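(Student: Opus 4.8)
The plan is to build a left-invariant ultrametric on $G = G(\Gamma, \mathfrak{p})$ by first putting an ultrametric on the generating set $V = \omega^\omega$ and then extending it to the whole group via the normal form of elements in a graph product. Recall that every element $g \in G$ has a normal form as a reduced word $g = v_1^{k_1} \cdots v_n^{k_n}$ in the generators (with exponents reduced modulo the relevant $\mathfrak{p}(v_i)$), where reducedness means one cannot shorten the word by using commutation relations to bring together two syllables with the same base and amalgamating or cancelling them. The standard metric on the Baire space already is an ultrametric; I would begin by defining, for $\eta \neq \nu \in \omega^\omega$, the quantity $d_0(\eta, \nu) = 2^{-\Delta(\eta,\nu)}$ where $\Delta(\eta,\nu)$ is the first coordinate at which they differ, so that $d_0$ is the standard ultrametric. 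The goal is an ultrametric $d$ on $G$ restricting to $d_0$ on $V$.

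The key idea would be to assign to each reduced word a "height" or "complexity" and then set $d(g,h) = 2^{-N}$ where $N$ measures how far out one must go in the Baire space (and how deep into the word structure) before $g^{-1}h$ is forced to be nontrivial. Concretely, I would fix an increasing sequence of finite approximations: for each $n$, let $\sim_n$ be the equivalence relation on $\omega^\omega$ of agreeing on the first $n$ coordinates, which has only countably many classes, and consider the quotient data determined by the closed graph $E$ restricted to representatives. The crucial use of the hypotheses is here: because $E$ is closed in $\omega^\omega \times \omega^\omega$, the edge relation is determined by finite approximations in a controlled way (its complement is open, so non-adjacency is witnessed by a finite initial segment), and because $\mathfrak{p}(\eta)$ depends only on $\eta(0)$, the torsion data is already decided at level $1$. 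I would use these to define a sequence of finitary quotient groups $G_n$ — roughly, graph products over the finite vertex-classes of $\sim_n$ with the induced (approximate) edge relation — together with compatible projections $\pi_n : G \to G_n$, and then set
\begin{equation*}
d(g,h) = \inf \{ 2^{-n} : \pi_n(g) = \pi_n(h) \},
\end{equation*}
adjusting the definition of $G_n$ and $\pi_n$ so that $\pi_n$ restricted to $V$ recovers agreement up to $\sim_n$, which gives the extension of $d_0$. Left-invariance is automatic since the $\pi_n$ are group homomorphisms, and the ultrametric inequality $d(g,h) \le \max\{d(g,f), d(f,h)\}$ follows from the fact that the $\pi_n$ form a coherent tower, so that the set of $n$ with $\pi_n(g) = \pi_n(h)$ is downward closed. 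Separability would follow because each $G_n$ is a graph product of countably many cyclic groups over a countable vertex set, hence countable, and the union of finitely-supported-modulo-$\pi_n$ elements is dense.

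The main obstacle I anticipate is making the projections $\pi_n$ genuinely well-defined group homomorphisms. The subtlety is that the defining relations of $G$ are the commutation relations $bEc \Rightarrow bc = cb$, and when I collapse vertices to their $\sim_n$-classes the induced edge relation on classes need not be consistent — two vertices in the same $n$-class may have different adjacency behavior toward a third class, so one cannot simply declare $G_n$ to be the graph product over the classes. The resolution, which is where the closedness of $E$ does real work, is that I must define $G_n$ not as a quotient by an edge relation on classes but by a more careful presentation that remembers, for each pair of classes, whether adjacency is already \emph{decided} (all pairs adjacent, all pairs non-adjacent, or still undetermined at level $n$), and only impose commutation for the decided-adjacent case while imposing no relation for the undetermined ones; closedness guarantees that for fixed $\eta, \nu$ the undetermined status is eventually resolved, so the tower is eventually faithful and $\bigcap_n \ker \pi_n = 1$, i.e. $d$ is a genuine metric and not merely a pseudometric. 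Verifying that these partial presentations cohere into homomorphisms $\pi_{n+1} \twoheadrightarrow \pi_n$ and that the induced function on normal forms is well-behaved under the graph-product reduction algorithm is the technical heart; I would handle it by checking that the reduction moves (shuffling commuting syllables, amalgamating equal-base syllables, deleting trivial syllables) are all respected by each $\pi_n$, using the normal form theorem for graph products to guarantee that $\pi_n(g) = \pi_n(h)$ is detected correctly on reduced words.
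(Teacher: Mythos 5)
Your overall architecture is the same as the paper's (Definition/Proposition \ref{def_prop} and Lemma \ref{lemma_for_th1}): a tower of countable graph products $G_n$ on the truncated vertices $\omega^n$, truncation maps $\pi_n$, and $d(g,h)$ measured by the first level at which $g$ and $h$ are separated. But your resolution of the well-definedness problem --- the step you yourself single out as the technical heart --- is inverted, and as stated it fails. The paper defines $E_n$ as the \emph{projection} of $E$: classes $\eta, \nu \in \omega^n$ are joined as soon as \emph{some} pair of extensions $(\eta', \nu') \in E$ exists, i.e.\ exactly in your ``decided adjacent'' \emph{and} ``undetermined'' cases. This over-approximation is what makes $\eta \mapsto \eta \restriction n$ automatically a homomorphism $\pi_n : G \rightarrow G_n$: every defining relation $bc = cb$ with $bEc$ maps to an $E_n$-commutation by definition, and $a^{\mathfrak{p}(a)} = 1$ is preserved because $\mathfrak{p}(\eta)$ depends only on $\eta(0)$ and $n \geq 1$. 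Your proposal imposes commutation \emph{only} for decided-adjacent pairs and ``no relation for the undetermined ones,'' which destroys the homomorphism property: if $bEc$ but the pair $(b \restriction n, c \restriction n)$ is still undetermined at level $n$ (closedness of $E$ does nothing to prevent this at any fixed level; it only resolves each fixed pair \emph{eventually}), then $bc = cb$ holds in $G$ while $(b \restriction n)(c \restriction n) \neq (c \restriction n)(b \restriction n)$ in your $G_n$, since in a graph product distinct generators commute only if joined. So your $\pi_n$ is not a well-defined homomorphism, and left-invariance, which you call automatic, has no footing. The tower coherence you invoke also fails in your version: ``all extension pairs adjacent'' at level $n+1$ does not imply the same at level $n$ (level-$n$ classes are larger), so there is no compatible surjection $G_{n+1} \rightarrow G_n$, and the downward closedness of $\{ n : \pi_n(g) = \pi_n(h) \}$, which your ultrametric inequality rests on, fails with it. In the paper's version both are trivial, since $(\eta, \nu) \in E_{n+1}$ implies $(\eta \restriction n, \nu \restriction n) \in E_n$.

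The worry that motivated your modification --- that vertices in one $n$-class may behave differently toward another class, so the collapsed graph is ``inconsistent'' --- is real but harmless: $\pi_n$ is not required to be injective or to carry reduced words to reduced words; all that is needed is $\bigcap_n \ker \pi_n = \{e\}$, and \emph{this} is where closedness of $E$ does its work, not in well-definedness. If $(\eta, \nu) \notin E$ then, $E$ being closed, for all large $m$ no extensions of $(\eta \restriction m, \nu \restriction m)$ lie in $E$, i.e.\ $(\eta \restriction m, \nu \restriction m) \notin E_m$. Hence, given $g \neq e$ with normal form $\eta_1^{\alpha_1} \cdots \eta_k^{\alpha_k}$, one chooses $m$ so large that truncation to level $m$ preserves distinctness and (non-)adjacency among the finitely many letters; the combinatorial criterion for reducibility (Fact \ref{fact_word_1}) then shows the truncated word is still reduced and nonempty, so $\pi_m(g) \neq e$ and $d(g) \geq 2^{-m} > 0$. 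With the corrected $E_n$, the remainder of your outline (the metric as $2^{-n(g)}$ for $n(g)$ the first level of nontriviality, separability from countability of each $G_n$ via representatives of the level-$n$ agreement classes, and the restriction to $\omega^{\omega}$ recovering the standard ultrametric) matches the paper's proof.
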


%

	\begin{theorem}\label{embed_for_cyclic} Let $G = G(\Gamma, \mathfrak{p})$, then the following are equivalent:
\begin{enumerate}[(a)]
	\item there is a metric on $\Gamma$ which induces a separable topology in which $E_{\Gamma}$ is closed;
	\item $G$ is embeddable into a Polish group;
	\item $G$ is embeddable into a non-Archimedean Polish group;
\end{enumerate}
\end{theorem}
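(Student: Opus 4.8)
**The plan is to prove Theorem \ref{embed_for_cyclic} by a cycle of implications $(a) \Rightarrow (c) \Rightarrow (b) \Rightarrow (a)$.** The implication $(c) \Rightarrow (b)$ is immediate, since every non-Archimedean Polish group is in particular a Polish group. The heart of the argument is $(a) \Rightarrow (c)$, where Theorem \ref{theorem1} does most of the work, and the reverse direction $(b) \Rightarrow (a)$, where I would extract the separable topology on $\Gamma$ from the group topology on $G$.

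**For $(a) \Rightarrow (c)$,** I would first reduce the given separable metric space $(\Gamma, d)$ with $E_\Gamma$ closed to the setting of Theorem \ref{theorem1}. A separable metric space embeds into the Baire space (or, after suitable coding, into a subspace of $\omega^\omega$), so I would realize the vertex set $V$ as a subset of $\omega^\omega$ in such a way that $E$ becomes the restriction of a closed relation on the Baire space. The subtlety is that Theorem \ref{theorem1} requires $\Gamma = (\omega^\omega, E)$ on \emph{all} of $\omega^\omega$ and that $\mathfrak{p}(\eta)$ depend only on $\eta(0)$; so I would extend the coloring and the edge relation to a closed graph on all of $\omega^\omega$ while preserving the isomorphism type of $G(\Gamma, \mathfrak{p})$ as a subgroup. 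Concretely, I would code the original vertices into a sparse subset of $\omega^\omega$ where the first coordinate records $\mathfrak{p}$, add dummy vertices on the remaining part of the space with any convenient coloring, and check that the graph product on the enlarged graph contains $G$ as a retract (or at least as a subgroup). Theorem \ref{theorem1} then supplies a left-invariant separable group ultrametric on the enlarged group $\hat G$; its completion is a Polish group, and since the metric is an ultrametric the completion is non-Archimedean. As $G$ embeds into $\hat G$, which embeds into its completion, we obtain an embedding of $G$ into a non-Archimedean Polish group.

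**For $(b) \Rightarrow (a)$,** I would start from an embedding $\iota\colon G \hookrightarrow H$ into a Polish group $H$ and transport the separable metric topology of $H$ back to $G$, so that $G$ carries a separable group topology (not necessarily complete) in which the generators $v \in V$ sit as distinct points. I would then define a metric on $\Gamma$ by declaring $d(v, w)$ to be, say, the distance $d_H(\iota(v), \iota(w))$ inherited from $H$; separability of $H$ yields separability of this topology on $V$. The key point to verify is that $E_\Gamma$ is closed in the resulting topology. For this I would use the defining relations of the graph product: $v E w$ holds exactly when $\iota(v)$ and $\iota(w)$ commute in $H$, and the commuting relation $\{(x,y) : xy = yx\}$ is closed in $H \times H$ because multiplication and inversion are continuous and $H$ is Hausdorff. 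Restricting this closed relation to $\iota(V) \times \iota(V)$ and pulling back gives that $E_\Gamma$ is closed, with the caveat that I must rule out accidental commutations: two \emph{distinct} generators commute in $G(\Gamma, \mathfrak{p})$ if and only if they are $E$-adjacent, which is exactly the content of the normal form / faithfulness results for graph products of cyclic groups that underlie the presentation in Definition \ref{def_cyclic_prod}.

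**The main obstacle I anticipate is the faithful coding in $(a) \Rightarrow (c)$:** namely, arranging the embedding of an arbitrary separable metric graph into the specific Baire-space form required by Theorem \ref{theorem1} while keeping $E$ closed, keeping $\mathfrak{p}$ dependent only on the first coordinate, and ensuring that passing to the enlarged graph does not alter the group $G$ up to the embedding we need. A secondary delicate point, on the $(b) \Rightarrow (a)$ side, is confirming that the topology pulled back from $H$ genuinely detects $E_\Gamma$ as a closed set — that is, that non-adjacency cannot be approximated by adjacent pairs in the limit; this again rests on the algebraic rigidity of graph products (distinct non-adjacent generators do not commute), so I would isolate that fact as the one algebraic input and treat the rest as topological bookkeeping.
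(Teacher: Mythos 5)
Your architecture coincides with the paper's: the cycle (c)$\Rightarrow$(b) trivial, (b)$\Rightarrow$(a) by pulling the metric back to the vertex set and using closedness of the commutation relation, and (a)$\Rightarrow$(c) by coding $(\Gamma, \mathfrak{p})$ into a closed coloured graph on $\omega^{\omega}$ and invoking Theorem \ref{theorem1} together with completion. Your (b)$\Rightarrow$(a) half is correct and is exactly Proposition \ref{propo_closed} applied to the term $xyx^{-1}y^{-1}$; the algebraic input you isolate (distinct generators commute if and only if adjacent) is the standard normal-form fact, and the paper's convention that $aE_{\Gamma}a$ for every vertex is precisely what makes the commutation relation on $V \times V$ literally equal to $E_{\Gamma}$, diagonal included, so that one never has to remove the diagonal (which could destroy closedness). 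Your closing steps of (a)$\Rightarrow$(c) — ultrametric, completion Polish, non-Archimedean via Fact \ref{char_non-arch} — also match the paper (Facts \ref{completion} and \ref{char_non-arch}).

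However, there is a genuine gap at the crux of (a)$\Rightarrow$(c): your opening claim that \emph{a separable metric space embeds into the Baire space} is false. Every subspace of $\omega^{\omega}$ is zero-dimensional, so for instance $\Gamma$ with vertex set $\mathbb{R}$, the usual metric and $E_{\Gamma} = \Delta_{\mathbb{R}}$ satisfies (a) but admits no topological embedding into $\omega^{\omega}$. The missing idea — the actual content of the paper's Lemma \ref{metric_implies_ultrametric} — is that one need not (and cannot) preserve the topology of $(\Gamma, d)$: only separability and the closedness of $E_{\Gamma}$ matter, so one replaces $d$ by an \emph{ultrametric} $d'$ inducing a generally finer, non-homeomorphic topology. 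Concretely, fixing a countable dense $D = \omega$ in $(\Gamma, d)$, one codes $a \mapsto \eta_a \in \omega^{\omega}$ where $\eta_a(0)$ records $\mathfrak{p}(a)$ and $\eta_a(n)$ is the least point of $D$ within $4^{-n}$ of $a$, and sets $d'(a,b) = 1/(lg(\eta_a \wedge \eta_b)+2)$. Closedness of $E_{\Gamma}$ survives because $d'$-closeness implies $d$-closeness (the computation in $(*)_2$), but separability is \emph{not} automatic under a refinement of the topology and requires the separate density argument $(*)_1$; these two verifications are the real work that your phrase ``suitable coding'' elides, and without the switch to an ultrametric your plan stalls at the first step. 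Once that lemma is available, the rest of your outline is sound: taking $E' = \overline{h(E_{\Gamma}) \cup \Delta_{\omega^{\omega}}}$ gives a closed graph on all of $\omega^{\omega}$ whose restriction to the image is exactly $h(E_{\Gamma})$ (this also needs proof; it is $(*)_4$), the original group sits inside the enlarged graph product as the subgroup generated by the image vertices (indeed a retract, as you note), and Theorem \ref{theorem1} with Facts \ref{completion} and \ref{char_non-arch} completes the argument.
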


	\begin{corollary}\label{random_graph} Let $G = G(\Gamma, \mathfrak{p})$, then the following are equivalent:
\begin{enumerate}[(a)]
	\item there is a metric on $\Gamma$ which induces a separable topology in which $E_{\Gamma}$ is closed;
	\item $G$ is embeddable into the automorphism group of the random graph;
	\item $G$ is embeddable into the automorphism group of Hall's universal locally finite group.
\end{enumerate}
\end{corollary}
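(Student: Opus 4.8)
The plan is to prove Corollary \ref{random_graph} by reducing it to Theorem \ref{embed_for_cyclic}, which already gives the equivalence of condition (a) with embeddability into a non-Archimedean Polish group. The key observation is that $\mathrm{Aut}(\mathcal{R})$ (the automorphism group of the random graph $\mathcal{R}$) and $\mathrm{Aut}(H)$ (where $H$ is Hall's universal locally finite group) are themselves non-Archimedean Polish groups: both $\mathcal{R}$ and $H$ are countable structures, and the automorphism group of any countable structure is a closed subgroup of the infinite symmetric group $S_\infty$, hence non-Archimedean Polish (a clopen neighborhood basis of the identity is given by pointwise stabilizers of finite tuples). So the implications (b) $\Rightarrow$ (c) of Theorem \ref{embed_for_cyclic} via either of these groups, together with (c) $\Rightarrow$ (a), show immediately that conditions (b) and (c) of the corollary each imply condition (a).

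\medskip
\noindent For the converse direction, I would establish that every non-Archimedean Polish group embeds (as a topological group) into both $\mathrm{Aut}(\mathcal{R})$ and $\mathrm{Aut}(H)$; combined with Theorem \ref{embed_for_cyclic}(a)$\Rightarrow$(c), condition (a) then yields embeddability of $G$ into each of these groups. For the random graph, I would invoke the well-known universality property: $\mathrm{Aut}(\mathcal{R})$ is a \emph{universal} non-Archimedean Polish group, i.e.\ it contains an isomorphic topological copy of every non-Archimedean Polish group. Concretely, every non-Archimedean Polish group is topologically isomorphic to a closed subgroup of $S_\infty$, and $S_\infty$ itself embeds as a closed subgroup of $\mathrm{Aut}(\mathcal{R})$ by a standard Fra\"iss\'e-theoretic argument (realizing the random graph as a structure on which $S_\infty$ acts faithfully with closed image, using genericity/homogeneity of $\mathcal{R}$). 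For Hall's group the analogous universality holds: $\mathrm{Aut}(H)$ is likewise a universal non-Archimedean Polish group, again because $H$ is the Fra\"iss\'e limit of the class of finite groups and its automorphism group realizes a copy of $S_\infty$, hence of every closed subgroup of $S_\infty$.

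\medskip
\noindent The main obstacle I anticipate is the universality of $\mathrm{Aut}(H)$: while the universality of $\mathrm{Aut}(\mathcal{R})$ is a classical and frequently cited fact, the corresponding statement for Hall's universal locally finite group requires a little more care. The route I would take is to show that $S_\infty$ embeds as a closed subgroup of $\mathrm{Aut}(H)$, which suffices since $\mathrm{Aut}(\mathcal{R}) \cong$ a closed subgroup of $S_\infty$ and conversely, so all three of $S_\infty$, $\mathrm{Aut}(\mathcal{R})$, and $\mathrm{Aut}(H)$ are bi-embeddable as non-Archimedean Polish groups and hence share the same class of embeddable subgroups. To embed $S_\infty$ into $\mathrm{Aut}(H)$, I would use the homogeneity of $H$ together with the fact that $H$ contains copies of all finite groups in a suitably generic way, so that any permutation of a canonical countable family of generators extends to an automorphism of $H$; verifying that this gives a \emph{continuous} group monomorphism with \emph{closed} image (equivalently, a topological embedding onto a closed subgroup) is the delicate point, and I would handle it by checking that the stabilizer structure matches, i.e.\ that finite conditions in $S_\infty$ correspond to open conditions in $\mathrm{Aut}(H)$ and vice versa.

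\medskip
\noindent Assembling these pieces, the corollary follows formally: (a) $\Rightarrow$ (c of Theorem \ref{embed_for_cyclic}) $\Rightarrow$ embeddability into $S_\infty$ $\Rightarrow$ embeddability into $\mathrm{Aut}(\mathcal{R})$ and into $\mathrm{Aut}(H)$, giving (b) and the Hall-group statement; and conversely each of the latter embeddings lands $G$ in a non-Archimedean Polish group, returning us to condition (a) via Theorem \ref{embed_for_cyclic}(c)$\Rightarrow$(a). Thus all three conditions of the corollary are equivalent, with Theorem \ref{embed_for_cyclic} doing the substantive work and the universality of the two concrete automorphism groups supplying the bridge.
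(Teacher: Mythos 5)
Your proposal is correct and takes essentially the same route as the paper, which likewise derives the corollary from Theorem \ref{embed_for_cyclic} and Fact \ref{char_non-arch} together with the facts that $Sym(\omega)$ embeds into the automorphism group of the random graph (citing \cite{muller}) and into the automorphism group of Hall's universal locally finite group (citing \cite{Sh_Pa_Hall}, where this non-trivial fact --- the ``delicate point'' you flag --- is actually proved, rather than re-derived in this paper). Note also that since embeddability in the corollary is as abstract groups, your concerns about the embedding of $Sym(\omega)$ into $\operatorname{Aut}(H)$ being topological with closed image are superfluous: an abstract group monomorphism suffices for (a)$\Rightarrow$(b),(c), and for the converse one only needs that both automorphism groups are Polish.
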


	The condition(s) occurring in Theorem 3 and Corollary \ref{random_graph} fail e.g. for the $\aleph_1$-half graph $\Gamma = \Gamma(\aleph_1)$, i.e. the graph on vertex set $\{ a_{\alpha} : \alpha < \aleph_1 \} \cup  \{ b_{\beta}: \beta < \aleph_1 \}$ with edge relation defined as $a_\alpha E_{\Gamma} b_\beta$ if and only if  $\alpha < \beta$.

	Theorem \ref{theorem1} is of independent interest and generalizes results on left-invariant group metrics on free groups on continuum many generators, see \cite{ding_gao}, \cite{ding_gao_1} and \cite{gao_1}.
\section{Proofs of the Theorems}

	\begin{convention} In Definition \ref{def_cyclic_prod} it is usually assumed that for every $a \in \Gamma$ we have $\{ a, a \} \not\in E_{\Gamma}$. In order to make our proofs more transparent we will diverge from this convention and assume that our graphs $\Gamma$ are such that $a \in \Gamma$ implies $aE_{\Gamma}a$. This is of course irrelevant from the point of view of the group $G = G(\Gamma, \mathfrak{p})$, since an element $a \in G$ always commutes with itself. 
\end{convention}

	\begin{proposition}\label{propo_closed} Let $G$ be a separable topological group which is metrizable (resp. ultrametrizable) by the metric $d$ and $V \subseteq G$. Then the metric (resp. ultrametric) $d \restriction V \times V$ makes $V$ into a separable space such that for every group term $\sigma$ the set $\{ \bar{a} \in V^{|\sigma|} : G \models \sigma(\bar{a}) = e \}$ is closed in the induced topology.
\end{proposition}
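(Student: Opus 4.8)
The plan is to unpack both claims directly from the definitions. We are given a separable topological group $G$ with a compatible metric (or ultrametric) $d$, and a subset $V \subseteq G$, and we must show two things about the subspace $(V, d\restriction V \times V)$: that it is separable, and that for each group term $\sigma$, the zero-set $Z_\sigma := \{ \bar{a} \in V^{|\sigma|} : G \models \sigma(\bar{a}) = e \}$ is closed.

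First I would dispatch separability. A metric (or ultrametric) space is separable if and only if it is second countable, and second countability is hereditary: any subspace of a second countable space is second countable. Since $G$ is separable and metrizable, it is second countable, hence so is $V$ in the subspace topology, hence $V$ is separable. If one prefers to avoid invoking the metrizable-separable-second-countable equivalence, one can argue directly: separable metric spaces are hereditarily separable, so the subspace $V$ inherits a countable dense subset. Either way this step is routine and the ultrametric case is identical.

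The substance is the closedness of $Z_\sigma$. The key observation is that a group term $\sigma(x_1, \dots, x_n)$ induces a continuous map $\hat{\sigma} \colon G^n \to G$, because multiplication and inversion are continuous in a topological group and $\sigma$ is built by finite composition from these operations together with the coordinate projections. The singleton $\{ e \}$ is closed in $G$ (as $G$ is metrizable, hence Hausdorff), so its preimage $\hat{\sigma}^{-1}(\{e\}) = \{ \bar{a} \in G^n : G \models \sigma(\bar{a}) = e \}$ is closed in $G^n$ with the product topology. Now $Z_\sigma$ is precisely the intersection of this closed set with $V^n$, and since the metric $d\restriction V \times V$ induces exactly the subspace topology on $V$ (and the product of subspace topologies on $V^n$ is the subspace topology induced from $G^n$), $Z_\sigma = \hat\sigma^{-1}(\{e\}) \cap V^n$ is closed in $V^n$. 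I would state explicitly that $|\sigma|$ denotes the number of variables $n$ so that the indexing matches.

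The main point requiring care — though it is conceptual rather than an obstacle — is the compatibility of topologies: one must confirm that the product topology on $V^n$ arising from the metric $d\restriction V\times V$ (via, say, the sup or sum metric on the power) coincides with the topology $V^n$ inherits as a subspace of $(G^n, $ product topology$)$. This is a standard fact about finite products of metric subspaces, and it is what licenses the transfer of closedness from $G^n$ down to $V^n$. I do not expect any genuine difficulty here; the whole proposition is essentially the remark that evaluation of group terms is continuous and that closedness passes to subspaces, packaged for later use. The ultrametric case needs no separate treatment, since an ultrametric is in particular a metric and all the topological facts invoked hold verbatim.
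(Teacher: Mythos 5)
Your proof is correct and follows essentially the same route as the paper's: the evaluation map $\bar{a} \mapsto \sigma(\bar{a})$ is continuous, so $\{\bar{a} \in G^{|\sigma|} : \sigma(\bar{a}) = e\}$ is closed, and intersecting with $V^{|\sigma|}$ gives closedness in the subspace topology. You additionally spell out the separability step and the compatibility of the product and subspace topologies, which the paper leaves implicit, but these are routine and do not constitute a different argument.
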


	\begin{proof} For every group term $\sigma$ the map $\bar{a} \mapsto \sigma(\bar{a})$ is continuous. Thus the set $\{ \bar{a} \in G^{|\sigma|} : G \models \sigma(\bar{a}) = e \}$ is closed in $(G, d)$, and so the set:
	$$\{ \bar{a} \in V^{|\sigma|} : G \models \sigma(\bar{a}) = e \} = \{ \bar{a} \in G^{|\sigma|} : G \models \sigma(\bar{a}) = e \} \cap V^{|\sigma|}$$
is closed in $(V, d \restriction V \times V)$. 
\end{proof}

	\begin{notation}
	\begin{enumerate}[(1)]
	\item Given a graph $\Gamma = (V, E)$ and a set $R$, by a map $h : \Gamma \rightarrow R$ we mean a map with domain $V$. Furthermore, given a map $h : \Gamma \rightarrow R$ we let $h(E) = \{ \{ h(a), h(b) \} : \{ a, b \} \in E \}$.
	\item Given $\eta \in X^\omega$, $n < \omega$ and $\nu \in X^{n}$, we write $\nu \triangleleft \eta$ to mean that $\eta \restriction n = \nu$. 
	\item Given $\eta \neq \eta' \in X^\omega$, we let $\eta \wedge \eta'$ be the unique $\nu \in X^{n}$ such that $\nu \triangleleft \eta$, $\nu \triangleleft \eta'$ and $n$ is maximal, and in this case we also let $lg(\eta \wedge \eta') = lg(\nu) = n$.
	\item Given a topological space $X$ and $Y \subseteq X$, we denote by $\overline{Y}$ the topological closure of $Y$ in $X$. Also, we denote by $\Delta_{X}$ the set $\{ (x, x) : x \in X \}$.
\end{enumerate}
\end{notation}

	\begin{lemma}\label{metric_implies_ultrametric} Let $\Gamma$ be a graph and $\mathfrak{p}: \Gamma \rightarrow \omega$ a graph colouring. Suppose that $\Gamma$ admits a separable metric $d$ which makes $E_{\Gamma}$ closed in the induced topology. Then:
	\begin{enumerate}[(1)]
	\item $\Gamma$ admits an ultrametric $d'$ with the same properties;
	\item there exists a one-to-one map $h: \Gamma \rightarrow \omega^\omega$ and a map $\mathfrak{p}^*: \omega^\omega \rightarrow \omega$ such that:
	\begin{enumerate}[(a)]
	\item $\overline{h(E_{\Gamma}) \cup \Delta_{\omega^{\omega}}} \cap h(\Gamma \times \Gamma) = h(E_{\Gamma})$;
	\item $\mathfrak{p}(a) = \mathfrak{p}^*(h(a))$, for every $a \in \Gamma$;
	\item $\eta_1(0) = \eta_2(0)$ if and only if $\mathfrak{p}^*(\eta_1) = \mathfrak{p}^*(\eta_2)$, for every $\eta_1, \eta_2 \in \omega^\omega$.
\end{enumerate}
\end{enumerate}
\end{lemma}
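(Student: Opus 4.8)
The plan is to establish the two parts of Lemma~\ref{metric_implies_ultrametric} in sequence, using (1) as a stepping stone toward (2).

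\medskip

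\noindent\emph{Proof sketch.} For part (1), I would start from the given separable metric $d$ on $\Gamma$ and produce an ultrametric $d'$ inducing a possibly coarser (but still separable) topology in which $E_\Gamma$ remains closed. The standard route is to embed $(\Gamma, d)$ into the Baire space: since $(\Gamma, d)$ is separable and metrizable, it is second countable, so I can fix a countable basis $\{ U_n : n < \omega \}$ and define a map sending each point to a sequence recording which basic sets it lies in. Pulling back the standard ultrametric on $\omega^\omega$ along such a map yields an ultrametric $d'$ on $\Gamma$; the only thing to check is that the induced topology is fine enough that $E_\Gamma$ stays closed. The cleanest way to guarantee this is to build the embedding $h : \Gamma \to \omega^\omega$ directly, which is exactly what part (2) demands, so I would fold (1) into the construction of $h$: once $h$ is a topological embedding with the closure property (a), the ultrametric $d'(x,y) := 2^{-\mathrm{lg}(h(x) \wedge h(y))}$ (with $d'(x,y)=1$ when $h(x)(0)\neq h(y)(0)$ and $d'(x,x)=0$) is an ultrametric making $E_\Gamma$ closed, settling (1).

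\medskip

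\noindent For part (2), the heart of the matter is the one-to-one map $h : \Gamma \to \omega^\omega$ satisfying (a)--(c). First I would arrange condition (c) by partitioning the color classes: since $\mathfrak p : \Gamma \to \omega$ takes countably many values, I can reserve the first coordinate $\eta(0)$ to encode the color, choosing an injection from the range of $\mathfrak p$ into $\omega$ and defining $\mathfrak p^*(\eta)$ to depend only on $\eta(0)$ so that (b) and (c) hold by fiat. The remaining coordinates $\eta(1), \eta(2), \dots$ are then free to carry the topological/combinatorial information. Working inside each color class (which is clopen in the ultrametric topology once the first coordinate is fixed), I would define $h$ on the higher coordinates by a refining sequence of countable partitions of $\Gamma$ into $d$-small pieces, so that $h$ becomes a topological embedding of $(\Gamma, d)$ into $\omega^\omega$; separability of $\Gamma$ is what allows each partition to be countable.

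\medskip

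\noindent The main obstacle is condition (a), which is subtler than mere continuity: it asks that the closure in $\omega^\omega \times \omega^\omega$ of $h(E_\Gamma)$ together with the diagonal, when intersected back with the image $h(\Gamma)\times h(\Gamma)$, recovers exactly $h(E_\Gamma)$ and introduces no spurious pairs beyond the diagonal. Since $E_\Gamma$ is closed in $(\Gamma,d)$ and $h$ is a homeomorphism onto its image, $h(E_\Gamma)$ is relatively closed in $h(\Gamma)\times h(\Gamma)$; the work is to show that passing to the closure in the full product adds only limit points lying outside $h(\Gamma)\times h(\Gamma)$ or on the diagonal. I expect this to require a careful choice of $h$ that separates non-adjacent pairs uniformly — roughly, ensuring that if $\{a,b\}\notin E_\Gamma$ then $h(a)$ and $h(b)$ diverge at a controlled finite level so that no sequence of edges can converge to $(h(a),h(b))$. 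Getting the refining partitions to respect the edge relation in this quantitatively uniform way, rather than just pointwise, is the delicate step, and it is where the closedness of $E_\Gamma$ must be used in full strength rather than as a mere topological afterthought.
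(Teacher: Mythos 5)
Your overall architecture --- reserve $\eta(0)$ for the colour, and code the remaining coordinates by countable approximations of $a$ at finer and finer $d$-scales --- is the same as the paper's, which sets $\eta_a(0)=\mathfrak{p}(a)$ and, for $n>0$, lets $\eta_a(n)$ be the least element of a fixed countable dense set at $d$-distance $<4^{-n}$ from $a$. But one of your steps is impossible as stated: $h$ cannot in general be made \emph{a topological embedding of $(\Gamma,d)$ into $\omega^\omega$}, since Baire space is zero-dimensional while $(\Gamma,d)$ may be connected (nothing prevents the vertex set from being $\mathbb{R}$ with its usual metric and a closed edge relation). Consequently your argument for relative closedness of $h(E_\Gamma)$, which rests on ``$h$ is a homeomorphism onto its image'', fails. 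What is achievable, and what the paper's coding gives, is an injection $h$ whose pullback ultrametric $d'$ induces a topology \emph{finer} than that of $d$: if $\eta_a$ and $\eta_{a'}$ agree at coordinate $n$, then $a$ and $a'$ are both within $4^{-n}$ of the same dense point, so $d(a,a')<2\cdot 4^{-n}$. This refinement keeps $E_\Gamma$ closed; separability of the finer topology does need attention (the paper proves it by picking, for each $\nu\in\omega^{<\omega}$, a vertex $a_\nu$ with $\nu\triangleleft\eta_{a_\nu}$ when possible; alternatively it is automatic because $h(\Gamma)$ is a subspace of the second countable space $\omega^\omega$), and your proposal does not address it.

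The second and more serious gap is that you leave the crux, condition (a), unproved, and the heuristic you offer --- a \emph{uniform} divergence of $h(a)$ and $h(b)$ at a controlled finite level for all non-adjacent pairs --- is neither available nor needed. The point you are missing is that once $d'$ is \emph{defined} as the pullback of the Baire-space ultrametric along $h$, condition (a) is essentially free: convergence $\eta_{a_n}\rightarrow\eta_a$ in $\omega^\omega$ \emph{is}, by definition, convergence $a_n\rightarrow a$ in $(\Gamma,d')$; hence any pair in $\overline{h(E_\Gamma)}\cap h(\Gamma\times\Gamma)$ is the $h$-image of a $d'$-limit of edges, and is itself an edge because $E_\Gamma$ is closed in $(\Gamma,d')$. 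That closedness, in turn, is a pointwise (not uniform) consequence of the estimate above: given $\{a,b\}\notin E_\Gamma$, take $\varepsilon$ witnessing closedness of $E_\Gamma$ in $(\Gamma,d)$ and $n$ with $2\cdot 4^{-n}<\varepsilon$; any $a',b'$ whose codes agree with those of $a,b$ past level $n$ satisfy $d(a,a'),d(b,b')<\varepsilon$, so $\{a',b'\}\notin E_\Gamma$. Your refining partitions with mesh tending to $0$ would deliver exactly the same estimate, so your plan is repairable; but as written it asserts an impossible property of $h$ and defers precisely the step that constitutes the actual proof.
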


	\begin{proof} Let $(\Gamma, \mathfrak{p})$ and $d$ be as in the statement of the lemma. If $\Gamma$ is countable the lemma is clearly true. Assume then that $\Gamma$ is uncountable. Let $D \subseteq \Gamma$ be a countable dense set of $(\Gamma, d)$, and $\leq_D$ a well-order of $D$ of order type $\omega$. Renaming the elements of $\Gamma$ we can assume that $D = \omega$ and $\leq_D$ is the usual order of the natural numbers. For $a \in \Gamma$ we define $\eta_a \in \omega^{\omega}$ by letting:
	$$\eta_a(n) = \begin{cases} \mathfrak{p}(a) \;\;\;\;\;\;\;\; \text{ if } n = 0  \\
	x(a, n) \;\;\;\; \text{ if } n > 0,
\end{cases}$$
where:
	\begin{enumerate}[(i)]
	\item $x(a, n)$ is at distance $< 1/4^n$ from $a$;
	\item $x(a, n)$ is minimal under the condition (i).
\end{enumerate}
	We define $d': \Gamma \times \Gamma \rightarrow \mathbb{R}_{\geq 0}$ such that:
	$$d'(a, b) = \frac{1}{lg(\eta_a \wedge \eta_b) + 2}.$$
Clearly $d'$ is an ultrametric. We verify $d'$ is as required.
\begin{enumerate}[$(*)_1$]
	\item $(\Gamma, d')$ is separable. 
\end{enumerate}
For each $\nu \in \omega^{< \omega}$ choose $a_{\nu}$ such that $\nu \triangleleft \eta_{a_{\nu}}$, if possible, and arbitrarily otherwise. Let $D' = \{ a_{\nu} : \nu \in \omega^{< \omega} \}$. We claim that $D'$ is dense in $(\Gamma, d')$. This suffices, since obviously $D'$ is a countable subset of $\Gamma$. Let then $b \in \Gamma$ and $\varepsilon > 0$, we shall find $a \in D'$ such that $d'(a, b) < \varepsilon$. Choose $n > 0$ such that $1/(n+2) < \varepsilon$, and let $\nu = \eta_b \restriction n$. Now, by the choice of $\nu$, $a_{\nu} \in D'$ and $\nu \triangleleft \eta_{a_{\nu}}$. Furthermore, clearly $\nu \trianglelefteq \eta_{a_{\nu}} \wedge \eta_b$, and so $lg(\eta_{a_{\nu}} \wedge \eta_b) \geq lg(\nu) = n$. Thus we have:
$$d'(a_{\nu}, b) = \frac{1}{lg(\eta_{a_{\nu}} \wedge \eta_b) + 2} \leq \frac{1}{n+2} < \varepsilon.$$
\begin{enumerate}[$(*)_2$]
	\item $E_{\Gamma}$ is closed in $(\Gamma, d')$. 
\end{enumerate}
Let $a, b \in \Gamma$ and suppose that $\{ a, b \} \not\in E_{\Gamma}$. Since $E_{\Gamma}$ is closed in $(\Gamma, d)$, there is $\varepsilon \in (0, 1)$ such that:
\begin{equation}
a', b' \in \Gamma, \; d(a, a') < \varepsilon, \; d(b, b') < \varepsilon \;\; \Rightarrow \;\; \{ a', b' \} \not\in E_{\Gamma}.
\end{equation}
Let $n < \omega$ be such that $n > 1$ and $1/n < \varepsilon$, we shall prove that:
\begin{equation}
a', b' \in \Gamma, \; d'(a, a') < \frac{1}{n+2}, \; d'(b, b') < \frac{1}{n+2} \;\; \Rightarrow \;\; \{ a', b' \} \not\in E_{\Gamma}.
\end{equation}
Now, for any $a'$ as in (2) we have that $lg(\eta_a \wedge \eta_{a'}) > n$, and so $\eta_{a}(n) = \eta_{a'}(n)$. Hence:
$$d(a, a') \leq d(a, \eta_{a}(n)) + d(a', \eta_{a}(n)) < \frac{1}{4^n} + \frac{1}{4^n} <  1/n < \varepsilon.$$
Using the same argument we see that for any $b'$ as in (2) we have that $d(b, b') < \varepsilon$, and so by (1) we conclude that $\{ a', b' \} \not\in E_{\Gamma}$, as wanted.
\begin{enumerate}[$(*)_3$]
	\item The map $h: \Gamma \rightarrow \omega^\omega$ such that $h(a) = \eta_a$ is one-to-one.
\end{enumerate} 
If $\eta_a = \eta_b$, then:
$$lim_{n \rightarrow \infty} \eta_{a}(n) = a = lim_{n \rightarrow \infty} \eta_{b}(n) = b.$$
\begin{enumerate}[$(*)_4$]
	\item $\overline{h(E_{\Gamma})} \cap h(\Gamma \times \Gamma) = h(E_{\Gamma})$.
\end{enumerate} 
Notice that for $(c_n)_{n < \omega} \in \Gamma^{\omega}$ and $c \in \Gamma$ we have:
	$$lim_{n \rightarrow \infty} \eta_{c_n} = \eta_c \; \Rightarrow \; lim_{n \rightarrow \infty} c_n = c \;\; \text{in }(\Gamma, d').$$
	Thus, if we have:
	$$lim_{n \rightarrow \infty} \eta_{a_n} = \eta_a, \; lim_{n \rightarrow \infty} \eta_{b_n} = \eta_b \text{ and } \bigwedge_{n < \omega} a_n E_{\Gamma} b_n,$$
then $aE_{\Gamma}b$, since $E_{\Gamma}$ is closed in $(\Gamma, d')$.
\begin{enumerate}[$(*)_5$]
	\item Let $\mathfrak{p}^*: \omega^\omega \rightarrow \omega$ be such that:
	$$\mathfrak{p}^*(\eta) = \begin{cases} \eta(0) \;\;\;\; \text{ if } \exists \eta_a (\eta_a(0) = \eta(0)) \\
	1 \;\;\;\;\;\;\;\;\; \text{ otherwise}.
\end{cases}$$
\end{enumerate} 
Then the map $\mathfrak{p}^*$ is clearly as wanted.
\end{proof}

	We need some basic word combinatorics for $G(\Gamma, \mathfrak{p})$.
	
	\begin{definition} Let $(\Gamma, \mathfrak{p})$ be as usual and $G = G(\Gamma, \mathfrak{p})$.
	\begin{enumerate}[(1)]
	\item A word $w$ in the alphabet $\Gamma$ is a sequence $(a_1^{\alpha_1}, ..., a_k^{\alpha_k})$, with $a_i \neq a_{i+1} \in \Gamma$, for $i = 1, ..., k-1$, and $\alpha_1, ..., \alpha_k \in \mathbb{Z} - \{0 \}$.
	\item We denote words simply as $a_1^{\alpha_1} \cdots a_k^{\alpha_k}$ instead of $(a_1^{\alpha_1}, ..., a_k^{\alpha_k})$.
	\item We call each $a_i^{\alpha_i}$ a syllable of the word $a_1^{\alpha_1} \cdots a_k^{\alpha_k}$.
	\item We say that the word $a_1^{\alpha_1} \cdots a_k^{\alpha_k}$ spells the element $g \in G$ if $ G \models g = a_1^{\alpha_1} \cdots a_k^{\alpha_k}$.
	\item We say that the word $w$ is reduced if there is no word with fewer syllables which spells the same element of $G$.
	\item We say that the consecutive syllables $a_i^{\alpha_i}$ and $a_{i+1}^{\alpha_{i+1}}$ are adjacent if $a_iE_{\Gamma}a_{i+1}$.
	\item We say that the word $w$ is a normal form for $g$ if it spells $g$ and it is reduced.
\end{enumerate}
\end{definition}

	\begin{fact}[\cite{gut}{[Lemmas 2.2 and 2.3]}]\label{fact_word_1} Let $G = G(\Gamma, \mathfrak{p})$.
	\begin{enumerate}[(1)] 
	\item If the word $a_1^{\alpha_1} \cdots a_k^{\alpha_k}$ spelling the element $g \in G$ is not reduced, then there exist $1 \leq p < q \leq k$ such that $a_p = a_q$ and $a_p$ is adjacent to each vertex $a_{p + 1}, a_{p + 2}, ..., a_{q-1}$.
	\item If $w_1 = a_1^{\alpha_1} \cdots a_k^{\alpha_k}$ and $w_2 = b_1^{\beta_1} \cdots b_k^{\beta_k}$ are normal forms for $g \in G$, then $w_1$ can be transformed into $w_2$ by repetedly swapping the order of adjacent syllables.
\end{enumerate} 
\end{fact}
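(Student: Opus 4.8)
This is the normal-form theorem for graph products of groups, and I would derive both clauses from a single confluence statement for a rewriting system on words, viewing the syllables as letters of a partially commutative alphabet. First I would translate the defining relations of Definition \ref{def_cyclic_prod} into elementary moves on words: two words spell the same element of $G$ if and only if one passes to the other by finitely many applications, and inverses, of (S) swapping two adjacent syllables $a_i^{\alpha_i}a_{i+1}^{\alpha_{i+1}}$ whenever $a_i E_{\Gamma} a_{i+1}$; (M) amalgamating two adjacent syllables with the same letter, replacing $a^{\alpha}a^{\beta}$ by $a^{\alpha+\beta}$ with the exponent read modulo $\mathfrak{p}(a)$ when $\mathfrak{p}(a)\neq\infty$; and (D) deleting a trivial syllable $a^{0}$. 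Crucially, (S) preserves the number of syllables while (M) and (D) strictly decrease it.

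The heart of the plan is to prove that the length-non-increasing subsystem --- in which (M) and (D) are applied only in the shortening direction while (S) is applied freely --- is confluent modulo the congruence generated by (S). I would pass to the shuffle-congruence classes (traces): termination is then immediate from syllable length, and local confluence reduces to a finite inspection of how an (M)-pattern can overlap an (S)-pattern or a second (M)-pattern, after which Newman's lemma supplies a well-defined reduced trace for each group element. Concretely this yields that every word reduces to a reduced word, and that any two reduced words reachable from a common word differ by (S)-moves alone.

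Clause (2) would then be immediate: two normal forms $w_1, w_2$ of $g$ are both reduced, so neither admits an (M)- or (D)-move even after shuffling, and confluence forces them to be joined by (S)-moves, i.e. by repeatedly swapping adjacent commuting syllables. For clause (1), if $w = a_1^{\alpha_1}\cdots a_k^{\alpha_k}$ is not reduced then some sequence of moves shortens it; I would take the first length-decreasing step, necessarily an amalgamation, and trace the two syllables it merges back through the preceding (S)-moves to a pair $a_p^{\alpha_p}, a_q^{\alpha_q}$ of $w$ with $p < q$, $a_p = a_q$, and $q-p$ minimal. Since only commutation moves precede this amalgamation, and they can render $a_p$ and $a_q$ adjacent only by sliding $a_p$ past every syllable strictly between them, each of $a_{p+1}, \dots, a_{q-1}$ must be $E_{\Gamma}$-adjacent to $a_p$, which is exactly the asserted conclusion.

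The technical crux is the confluence-and-trace-back step. The honest work is in verifying local confluence, where the delicate overlaps are an amalgamable pair $a^{\alpha}a^{\beta}$ abutting a third syllable and two amalgamable pairs sharing a syllable; and in clause (1) one must exclude the possibility that some intervening syllable also carries the letter $a_p$ before concluding. The minimal choice of $q - p$, combined with an induction on syllable length, is what makes the sliding argument force $E_{\Gamma}$-adjacency of all intervening letters to $a_p$.
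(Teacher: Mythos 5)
Your plan is sound, but note first that the paper itself gives no proof of this statement: it is imported as a black-box Fact from \cite{gut} (Lemmas 2.2 and 2.3), whose arguments in turn rest on Green's normal form theorem for graph products. Measured against that source, your route is essentially the standard one, only more formally packaged: where \cite{gut} argues directly with shuffles and syllable amalgamations, you organize the same two moves into a rewriting system modulo the shuffle congruence and invoke Newman's lemma on traces, with termination measured by syllable count — this is precisely the complete-rewriting-system formulation worked out by Hermiller and Meier for graph products, so the confluence verification you flag as the technical crux is known to go through. Two points deserve care. First, your opening translation (two words spell the same element of $G$ iff they are joined by (S), (M), (D) moves and their inverses) is not automatic from the presentation and needs its own argument; the clean way is von Dyck's theorem applied to the multiplication induced on move-classes of words, showing the quotient realizes the presentation of Definition \ref{def_cyclic_prod}. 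Second, in clause (1) your trace-back is correct but should be phrased as: since every step before the first shortening step is an (S)-move (words carry nonzero exponents, so that step is indeed an amalgamation), each syllable lying strictly between the tracked pair can leave the enclosed interval only by swapping past one of the two endpoint syllables, both of which carry the letter $a_p = a_q$; hence every intervening letter is $E_{\Gamma}$-adjacent to $a_p$. Under the paper's standing convention that $aE_{\Gamma}a$, even the case you worry about — an intervening syllable carrying the letter $a_p$ — causes no trouble, so the minimality of $q-p$ is a convenience rather than a necessity.
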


	
	\begin{definition_proposition}\label{def_prop} Let $\Gamma = (\omega^{\omega}, E)$, with $E$ closed in the Baire space, and $\mathfrak{p}: V \rightarrow \{ p^n : p \text{ prime, } n \geq 1 \} \cup \{ \infty \}$ such that $\mathfrak{p}(\eta)$ depends only on $\eta(0)$. For $0 < n < \omega$, let:
	$$E_n = \{ (\eta, \nu) : \eta, \nu \in \omega^{n} \text{ and there are } (\eta', \nu') \in E \text{ such that } \eta \triangleleft \eta' \text{ and } \nu \triangleleft \nu' \},$$
and $G_n = G((\omega^n, E_n), \mathfrak{p}_n)$, where $\mathfrak{p}_n(\eta) = \mathfrak{p}(\eta')(0)$ for any $\eta \triangleleft \eta'$. For $g \in G(\Gamma, \mathfrak{p}) - \{ e \}$ and $\eta_1^{\alpha_1} \cdots \eta_k^{\alpha_k}$ a word spelling $g$, we define $n(g)$ as the minimal $0 < n < \omega$ such that:
	$$G_n \models (\eta_1 \restriction n)^{\alpha_1} \cdots (\eta_k \restriction n)^{\alpha_k} \neq e.$$
	Finally, for $g \in G(\Gamma, \mathfrak{p}) - \{ e \}$, we define $d(g) = 2^{- n(g)}$, and $d(e) = 0$. 
\end{definition_proposition}

	\begin{proof} We have to show that $n(g)$ does not depend on the choice of the word spelling $g$. So let $\eta_1^{\alpha_1} \cdots \eta_k^{\alpha_k}$ and $\theta_1^{\beta_1} \cdots \theta_m^{\beta_m}$ be words spelling $g \in G$, we want to show that, for every $0 < n < \omega$, the words $(\eta_1 \restriction n)^{\alpha_1} \cdots (\eta_k \restriction n)^{\alpha_k}$ and $(\theta_1 \restriction n)^{\beta_1} \cdots (\theta_m \restriction n)^{\beta_m}$ spell the same element $g' \in G_n$. By Fact \ref{fact_word_1} this is clear, since $\eta_1 E \eta_2$ implies $\eta_1 \restriction n E_n \eta_2 \restriction n$, and $\mathfrak{p}(\eta)$ depends only on $\eta(0)$.
\end{proof}
	
	The following lemma proves Theorem \ref{theorem1}.

	\begin{lemma}\label{lemma_for_th1} Let $\Gamma = (\omega^{\omega}, E)$, with $E$ closed in the Baire space, $\mathfrak{p}: V \rightarrow \{ p^n : p \text{ prime, } n \geq 1 \} \cup \{ \infty \}$ such that $\mathfrak{p}(\eta)$ depends only on $\eta(0)$, and $G = G(\Gamma, \mathfrak{p})$. The function $d: G \times G \rightarrow [0, 1)_{\mathbb{R}}$ such that $d(g, h) = d(g^{-1}h)$, for $d: G \rightarrow [0, 1)_{\mathbb{R}}$ as in Definition/Proposition \ref{def_prop}, is a left-invariant separable group ultrametric extending the usual metric on $\omega^{\omega}$.
\end{lemma}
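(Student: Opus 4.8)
The plan is to verify the four required properties of $d$ in turn: that it is well-defined and takes values in $[0,1)$, that it is a left-invariant metric, that it is in fact an ultrametric, and that it is separable and extends the standard metric on $\omega^\omega$. The well-definedness of $d(g) = 2^{-n(g)}$ is already guaranteed by Definition/Proposition \ref{def_prop}, so the core work is to establish the ultrametric inequality and the separability/extension claims.

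First I would record the basic shape of the argument: for $g \neq e$ we have $d(g) = 2^{-n(g)} \in (0,1)$, and $d(e) = 0$, so $d(g,h) = d(g^{-1}h) \geq 0$ with equality iff $g^{-1}h = e$, i.e. $g = h$. Left-invariance is immediate from the definition, since $d(fg, fh) = d((fg)^{-1}(fh)) = d(g^{-1}h) = d(g,h)$. The symmetry $d(g^{-1}) = d(g)$ should be noted: if $\eta_1^{\alpha_1}\cdots\eta_k^{\alpha_k}$ spells $g$, then $\eta_k^{-\alpha_k}\cdots\eta_1^{-\alpha_1}$ spells $g^{-1}$, and truncating to length $n$ commutes with inversion, so the projected word in $G_n$ is trivial iff its reverse-inverse is, giving $n(g^{-1}) = n(g)$. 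This makes $d$ symmetric as a two-variable function.

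The heart of the proof is the strong triangle (ultrametric) inequality $d(g,h) \leq \max\{d(g,f), d(f,h)\}$, equivalently $n(g^{-1}h) \geq \min\{n(g^{-1}f), n(f^{-1}h)\}$. I would prove the contrapositive at the level of $n$: set $a = g^{-1}f$, $b = f^{-1}h$, so $g^{-1}h = ab$, and let $n = \min\{n(a), n(b)\}$. The key observation is that the truncation map to $G_n$ is a group homomorphism in the sense supplied by Definition/Proposition \ref{def_prop} — concatenating words and then truncating each syllable to length $n$ agrees with truncating first and multiplying in $G_n$, because adjacency and the coloring are preserved under truncation (this is exactly the content verified in the proof of \ref{def_prop} via Fact \ref{fact_word_1}). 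Hence the image of $ab$ in $G_n$ is the product of the images of $a$ and $b$. Since $n \leq n(a)$ and $n \leq n(b)$, by minimality of $n(a)$ and $n(b)$ both images are trivial in $G_n$, so their product is trivial, giving $n(ab) > n \geq \min\{n(a),n(b)\}$ — wait, more precisely the image of $ab$ in $G_n$ is $e$, so $n(ab) > n$, which yields $d(ab) = 2^{-n(ab)} < 2^{-n} = \max\{d(a),d(b)\}$. This delivers the ultrametric inequality; the main obstacle here is being careful that the truncation genuinely respects multiplication, which is where Fact \ref{fact_word_1} and the hypothesis that $\mathfrak{p}(\eta)$ depends only on $\eta(0)$ are essential.

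Finally I would check that $d$ restricts to the standard metric on $\omega^\omega$ and that $(G,d)$ is separable. For the extension claim, the standard ultrametric on $\omega^\omega$ sends distinct $\eta,\nu$ to $2^{-\mathrm{lg}(\eta\wedge\nu)}$ (up to the normalizing convention in the statement); I would compute $n(\eta^{-1}\nu)$ for vertices $\eta \neq \nu$ and confirm it equals $\mathrm{lg}(\eta\wedge\nu)$ plus the appropriate constant, so that $d$ agrees with the Baire-space metric on single generators. For separability I would exhibit a countable dense set: the subgroup generated by the finitely-supported sequences, or more directly the set of elements spelled by words whose letters lie in $\omega^{<\omega}$ extended by zeros. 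Given $g$ and $\varepsilon > 0$, choose $n$ with $2^{-n} < \varepsilon$ and replace each generator $\eta_i$ appearing in a word for $g$ by a generator $\eta_i'$ agreeing with $\eta_i$ on the first $n$ coordinates and drawn from a fixed countable set; the resulting element $g'$ satisfies $n(g^{-1}g') > n$ since the two words truncate identically in $G_n$, whence $d(g,g') < \varepsilon$. The only subtlety is ensuring the approximating generators can be chosen from a single countable set independent of $g$, which follows since only the length-$n$ initial segments matter and there are countably many of these.
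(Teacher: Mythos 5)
Your proposal has a genuine gap at what is in fact the crux of the lemma: you never prove that $d(g)>0$ for $g\neq e$, i.e.\ that $n(g)$ is \emph{finite} for every nontrivial $g$, dismissing this as ``already guaranteed by Definition/Proposition \ref{def_prop}''. But the proof of that Definition/Proposition only establishes that $n(g)$ is independent of the chosen spelling word; it does not establish that some level $n$ with nontrivial truncation exists at all. This is precisely where the hypothesis that $E$ is closed enters --- its only use in the entire lemma, and the step to which the paper devotes part (i) of its proof: given a normal form $\eta_1^{\alpha_1}\cdots\eta_k^{\alpha_k}$ for $g\neq e$, closedness of $E$ yields an $m$ such that for all $i<j$ with $\eta_i\neq\eta_j$ one has $\eta_i E \eta_j$ iff $(\eta_i\restriction m)\mathrel{E_m}(\eta_j\restriction m)$ (for a non-edge, a basic open neighbourhood of $(\eta_i,\eta_j)$ disjoint from $E$ gives a level $m$ at which no pair of extensions is an edge, which is exactly $(\eta_i\restriction m,\eta_j\restriction m)\notin E_m$); combined with Fact \ref{fact_word_1}(1) this shows the truncated word remains reduced in $G_m$, whence $n(g)\leq m$. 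Without closedness the statement is simply false: a non-adjacent pair whose restrictions are $E_m$-related at every level $m$ would make a nontrivial commutator have trivial truncations at all levels, so $d$ would fail to be a metric. Your proposal mis-identifies ``the core work'' as the ultrametric inequality and separability, when those are the routine parts.

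There is also an off-by-one error in your ultrametric step: with $n=\min\{n(a),n(b)\}$, minimality does \emph{not} give triviality of the images in $G_n$ itself --- at level $n(a)$ the image of $a$ is by definition nontrivial. What is true is that for every $m<n$ both images are trivial in $G_m$ (this is how the paper argues, with $m<t=\min\{n(g),n(h)\}$), so $n(ab)\geq n$ and $d(ab)\leq\max\{d(a),d(b)\}$; your strict conclusion $d(ab)<\max\{d(a),d(b)\}$ is false in general (take $a=b$ a single generator of infinite order: $n(a)=n(a^2)=1$). This slip is easily repaired since only the non-strict inequality is needed. Your remaining points --- left-invariance, the symmetry argument via reversing and inverting the word, the computation $n(\eta^{-1}\nu)=lg(\eta\wedge\nu)+1$ for the extension claim, and the dense set obtained by replacing generators with ones from a fixed countable set agreeing on the first $n$ coordinates --- are correct and essentially coincide with the paper's proof (your dense set is a concrete instance of the paper's representatives of the $R_n$-classes). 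But as written, the proposal omits the one step that makes the lemma true.
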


	\begin{proof} We show that the function $d: G \rightarrow [0, 1)_{\mathbb{R}}$ of Definition/Proposition \ref{def_prop} is an ultranorm, i.e. that it satisfies the following:
	\begin{enumerate}[(i)]
	\item $d(g) = 0$ iff $g = e$;
	\item $d(gh) \leq max\{ d(g), d(h)\}$, for every $g, h \in G$;
	\item $d(g) = d(g^{-1})$, for every $g \in G$.
\end{enumerate}
	We prove (i). Let $g \neq e$ and $\eta_1^{\alpha_1} \cdots \eta_k^{\alpha_k}$ a normal form for $g$. Let $0 < m < \omega$ be such that for every $1 \leq i < j \leq k$ with $\eta_i \neq \eta_j$ we have $\eta_i E \eta_j$ iff $\eta_i \restriction m E_m \eta_j \restriction m$. Then $n(g) \leq m$ and so $2^{- m} \leq 2^{- n(g)} = d(g)$.
	\newline We prove (ii). Without loss of generality $g \neq e$ and $h \neq e$. Let $\eta_1^{\alpha_1} \cdots \eta_k^{\alpha_k}$ and $\theta_1^{\beta_1} \cdots \theta_p^{\beta_p}$ be normal forms for $g$ and $h$, respectively, and let $t = min \{ n(g), n(h) \}$. Then for every $0 < m < t < \omega$ we have:
	$$G_m \models (\eta_1 \restriction m)^{\alpha_1} \cdots (\eta_k \restriction m)^{\alpha_k} (\theta_1 \restriction m)^{\beta_1} \cdots (\theta_p \restriction m)^{\beta_p} = ee = e.$$
	Hence, $t \leq n(gh)$ and so $d(gh) \leq max\{d(g), d(h) \}$.
	\newline We prove (iii). Let $\eta_1^{\alpha_1} \cdots \eta_k^{\alpha_k}$ be a normal form for $g$. It suffices to show that for every $0 < n < \omega$ we have:
	$$G_n \models (\eta_1 \restriction n)^{\alpha_1} \cdots (\eta_k \restriction n)^{\alpha_k} = e \; \Leftrightarrow \; G_n \models (\eta_k \restriction n)^{-\alpha_k} \cdots (\eta_1 \restriction n)^{-\alpha_1} = e,$$
but this is trivially true.
	\newline The fact that $d$ extends the usual metric on $\omega^{\omega}$ is immediate. Thus we are only left to show the separability of $(G, d)$. For every $n < \omega$, define a relation $R_n$ on $G$ by letting $aR_nb$ iff there exist normal forms:
	$$a = \eta_{a, 1}^{\alpha(a, 1)} \cdots \eta_{a, k_a}^{\alpha(a, k_a)} \;\; \text{ and } \;\; b = \eta_{b, 1}^{\beta(b, 1)} \cdots \eta_{b, k_b}^{\beta(b, k_b)}$$
such that $k_a = k_b$, $\alpha(a, \ell) = \beta(b, \ell)$ and $\eta_{a, \ell} \restriction n = \eta_{b, \ell} \restriction n$. Clearly $R_n$ is an equivalence relation on $G$ and it has $\leq \aleph_0$ equivalence classes. For every $n < \omega$, let $X_n$ be a set of representatives of $R_n$ equivalence classes. Then $X = \bigcup_{n < \omega} X_n$ is countable and dense in $(G, d)$, and so it witnesses the separability of $(G, d)$.
\end{proof}

	We need two facts before proving Theorem \ref{embed_for_cyclic}.

\begin{fact}[\cite{gao}{[Theorem 2.1.3]}]\label{completion} Let $G$ be a topological group with compatible left-invariant metric (resp. ultrametric) $d$. Let  $D$ be defined such that:
	$$D(g, h) = d(g, h) + d(g^{-1}, h^{-1}),$$
and $\hat{G}$ the completion of the metric space $(G, D)$. Then the multiplication operation of $G$ extends uniquely onto $\hat{G}$ making $\hat{G}$ into a topological group. Furthermore, there is a unique compatible left-invariant metric (resp. ultrametric) $\hat{d}$ \mbox{on $\hat{G}$ extending $d$.}
\end{fact}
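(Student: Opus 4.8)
This is the standard two-sided (Raikov) completion theorem, and the plan is to prove it by hand in the metric setting. The conceptual point to keep in mind throughout is that multiplication is in general \emph{not} uniformly continuous with respect to $D$, so the extension cannot be produced by the usual principle that uniformly continuous maps extend to the completion; it must be built directly from Cauchy sequences. First I would record the elementary properties of $D$. Symmetry and the triangle inequality are inherited term by term from $d$, so $D$ is a metric, and it is compatible with the topology of $G$: since $D \geq d$ the $D$-topology is at least as fine as the $d$-topology, while if $d(g_n, g) \to 0$ then continuity of inversion in $G$ gives $d(g_n^{-1}, g^{-1}) \to 0$, hence $D(g_n, g) \to 0$, so the two metric topologies coincide. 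Writing $\|g\| = d(e, g)$, left-invariance gives $d(x,y) = \|x^{-1}y\|$ and $d(x^{-1},y^{-1}) = \|xy^{-1}\|$, and the computation $D(g^{-1},h^{-1}) = d(g^{-1},h^{-1}) + d(g,h) = D(g,h)$ shows that inversion is a $D$-isometry. In the ultrametric case $d + d^{-1}$ need not satisfy the strong triangle inequality, but this is harmless: it defines the same Cauchy sequences and the same completion, and the ultrametric is recovered at the end as $\hat d$, a limit of ultrametrics.

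The heart of the proof is to show that the product of two $D$-Cauchy sequences is again $D$-Cauchy, which is what lets multiplication be defined on $\hat G$. Given $D$-Cauchy $(a_n)$ and $(b_n)$, I would estimate $d(a_nb_n,a_mb_m) = \|b_n^{-1}a_n^{-1}a_m b_m\|$ by fixing a base index $N_0$ and using the telescoping identity $b_n^{-1}(a_n^{-1}a_m)b_m = (b_n^{-1}b_{N_0})\,(b_{N_0}^{-1}(a_n^{-1}a_m)b_{N_0})\,(b_{N_0}^{-1}b_m)$, so that subadditivity of $\|\cdot\|$ gives $\|b_n^{-1}b_{N_0}\| + \|b_{N_0}^{-1}(a_n^{-1}a_m)b_{N_0}\| + \|b_{N_0}^{-1}b_m\|$. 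The outer two terms equal $d(b_n,b_{N_0})$ and $d(b_{N_0},b_m)$ and are small by the Cauchy condition on $(b_n)$, while the middle term is the conjugate of the \emph{small} element $a_n^{-1}a_m$ by the \emph{fixed} element $b_{N_0}$. The key observation is that conjugation by a single fixed group element is continuous and fixes $e$, so $\|a_n^{-1}a_m\| \to 0$ forces $\|b_{N_0}^{-1}(a_n^{-1}a_m)b_{N_0}\| \to 0$; one needs only ordinary continuity of one conjugation, never any uniformity in the conjugator. The $d^{-1}$-part $d((a_nb_n)^{-1},(a_mb_m)^{-1}) = \|a_nb_nb_m^{-1}a_m^{-1}\|$ is handled identically, now conjugating the small element $b_nb_m^{-1}$ by a fixed $a_{N_0}$. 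The same three-factor splitting also proves independence of the representatives, so multiplication descends to a well-defined map on $\hat G$. This fixing of a base point is exactly the device that circumvents the failure of uniform continuity, is where the topological-group hypothesis is genuinely used, and is the main obstacle of the proof.

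With multiplication defined and inversion extended via the isometry above, the group axioms pass to $\hat G$ by density and continuity. I would then set $\hat d(\alpha, \beta) = \lim_n d(a_n, b_n)$ for representatives $a_n \to \alpha$, $b_n \to \beta$; the limit exists because $|d(a_n,b_n) - d(a_m,b_m)| \leq D(a_n,a_m) + D(b_n,b_m)$, and it is representative-independent for the same reason. Then $\hat d$ extends $d$, inherits the strong triangle inequality when $d$ is an ultrametric (the inequality $\leq \max$ passes to limits), and is left-invariant since $d(c_na_n, c_nb_n) = d(a_n,b_n)$ on $G$; its compatibility with the completion topology follows exactly as in the first paragraph, since the extension $\hat D$ of $D$ still equals $\hat d + \widehat{d^{-1}}$ with inversion a $\hat D$-isometry. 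Finally, inversion is continuous on $\hat G$ (being a $\hat D$-isometry), and a group carrying a left-invariant metric with continuous inversion is automatically a topological group: left translations are $\hat d$-isometries, right translations are continuous because $R_\beta = \iota \circ L_{\beta^{-1}} \circ \iota$ with $\iota$ denoting inversion, and joint continuity of multiplication then follows from $\hat d(\alpha_k\beta_k, \alpha\beta) \leq \hat d(\beta_k, \beta) + \hat d(\alpha_k\beta, \alpha\beta)$. Uniqueness of the extended multiplication and of $\hat d$ is immediate from the density of $G$ in $\hat G$ together with continuity.
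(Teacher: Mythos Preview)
The paper does not prove this statement at all; it is stated as a \emph{Fact} with a citation to Gao's textbook [Theorem~2.1.3] and is used as a black box in the proof of Theorem~\ref{embed_for_cyclic}. So there is no ``paper's own proof'' to compare against.

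Your proposal is a correct and well-organized account of the standard Raikov completion argument. You correctly isolate the main obstacle (multiplication is not uniformly $D$-continuous) and the device that overcomes it (freeze a base index $N_{0}$ so that only conjugation by a single fixed element is needed, for which ordinary continuity suffices). The treatment of the ultrametric case is also right: $D=d+d^{-1}$ need not be an ultrametric, but that is irrelevant because the ultrametric structure is recovered in $\hat d$ as a limit of ultrametric values.

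One spot deserves a sentence more of care. When you assert that compatibility of $\hat d$ with the $\hat D$-topology ``follows exactly as in the first paragraph,'' the original argument used continuity of inversion in the ambient topological group $G$. In $\hat G$ you only know at that stage that inversion is a $\hat D$-isometry, which does not immediately give $\hat d(\alpha_n,\alpha)\to 0 \Rightarrow \hat d(\alpha_n^{-1},\alpha^{-1})\to 0$. The clean fix is to prove directly that if $(a_n),(b_n)$ are $D$-Cauchy with $d(a_n,b_n)\to 0$ then $d(a_n^{-1},b_n^{-1})\to 0$: writing $c_n=a_n^{-1}b_n\to e$ one has $a_nb_n^{-1}=a_nc_n^{-1}a_n^{-1}$, and the same base-point trick applied to the $D$-Cauchy sequence $(a_n)$ controls this conjugate. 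This simultaneously gives positive definiteness of $\hat d$ and the missing implication. With that adjustment the argument is complete.
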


	\begin{definition} We say that a Polish group $G$ is non-Archimedean if it has a neighbourhood base of the identity that consists of open subgroups.
\end{definition}

	\begin{fact}[\cite{kechris}{[Theorem 1.5.1]}]\label{char_non-arch} Let $G$ be Polish. The following are equivalent:
	\begin{enumerate}[(a)]
	\item $G$ is non-Archimedean;
	\item $G$ is isomorphic to a closed subgroup of $Sym(\omega)$;
	\item $G$ admits a compatible left-invariant ultrametric;
	\item $G$ is isomorphic to the automorphism group of a countable first-order structure.
	\end{enumerate}
\end{fact}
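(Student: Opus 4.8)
The plan is to prove the four-way equivalence as a cycle assembled from two ``local'' equivalences, $(a)\Leftrightarrow(c)$ and $(b)\Leftrightarrow(d)$, joined by the bridge $(a)\Leftrightarrow(b)$. I begin with $(a)\Leftrightarrow(c)$, which is the most direct. For $(c)\Rightarrow(a)$, suppose $d$ is a compatible left-invariant ultrametric and set $V_\varepsilon=\{g\in G: d(g,e)<\varepsilon\}$ for each $\varepsilon>0$. Left-invariance gives $d(gh,e)\le\max\{d(gh,g),d(g,e)\}=\max\{d(h,e),d(g,e)\}$ and $d(g^{-1},e)=d(e,g)=d(g,e)$, so each $V_\varepsilon$ is a subgroup; being an open ball it is an open subgroup, and letting $\varepsilon\to 0$ these form a neighbourhood base of $e$, so $G$ is non-Archimedean. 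For $(a)\Rightarrow(c)$, fix a decreasing neighbourhood base $\{V_n\}_{n<\omega}$ of $e$ by open subgroups with $V_0=G$ and $\bigcap_n V_n=\{e\}$ (possible as $G$ is Hausdorff and metrizable). Define $d(g,h)=2^{-n}$ with $n$ largest such that $g^{-1}h\in V_n$, and $d(g,h)=0$ when $g=h$; left-invariance is built in, the ultrametric inequality comes from each $V_n$ being a subgroup, and the balls of $d$ are precisely the cosets of the $V_n$, so $d$ is compatible.

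Next I treat $(b)\Leftrightarrow(d)$, which is a Galois-type correspondence via orbits. For $(d)\Rightarrow(b)$, take the universe of the countable structure $\mathcal{M}$ to be $\omega$, so $\mathrm{Aut}(\mathcal{M})\le Sym(\omega)$; preserving each relation on each finite tuple is a condition closed in the topology of pointwise convergence, hence $\mathrm{Aut}(\mathcal{M})$ is a closed subgroup. For $(b)\Rightarrow(d)$, given a closed $G\le Sym(\omega)$, build $\mathcal{M}$ on universe $\omega$ by introducing, for each $n$, one $n$-ary relation symbol naming each orbit of the induced action of $G$ on $\omega^n$. Then $G\subseteq\mathrm{Aut}(\mathcal{M})$ by construction, while any $\sigma\in\mathrm{Aut}(\mathcal{M})$ carries every finite tuple into its own $G$-orbit, so $\sigma$ agrees with some element of $G$ on each finite set and thus lies in the closure of $G$; closedness then yields $\sigma\in G$, giving $\mathrm{Aut}(\mathcal{M})=G$.

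It remains to bridge the two clusters through $(a)\Leftrightarrow(b)$. The direction $(b)\Rightarrow(a)$ is immediate: in $Sym(\omega)$ the pointwise stabilizers $G_{(n)}=\{g:g{\restriction}n=\mathrm{id}\}$ are open subgroups forming a neighbourhood base of $e$, and their intersections with a closed subgroup do the same for it. The substantive direction is $(a)\Rightarrow(b)$. Fix a neighbourhood base $\{V_n\}$ of open subgroups as above. Each $V_n$ is open, hence of countable index by separability, so each quotient $G/V_n$ is countable; left multiplication gives a homomorphism $G\to Sym(G/V_n)$, and amalgamating over $n$ produces a continuous homomorphism $\pi:G\to Sym(\coprod_n G/V_n)\cong Sym(\omega)$, which is injective because $\bigcap_n V_n=\{e\}$.

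The main obstacle is that $\pi$ must be a topological \emph{embedding} with \emph{closed} image, not merely a continuous injection. Continuity is clear, and one sees $\pi$ is open onto its image since the $\pi$-preimage of the stabilizer of the coset $eV_n$ is exactly $V_n$, so $\pi$ sends the base $\{V_n\}$ to a base of relatively open sets. For closedness I would argue that any $\sigma$ in the closure of $\pi(G)$ is approximated, on each finite piece $\coprod_{m<n}G/V_m$, by some $\pi(g_n)$, and that the compatibility of the cosets across levels forces the $g_n$ to form a $d$-Cauchy (indeed eventually $V_n$-coherent) sequence converging in the Polish group $G$ to a preimage of $\sigma$, whence $\sigma\in\pi(G)$. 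This limit bookkeeping, together with checking that the embedded topology coincides with the original Polish topology on $G$, is the delicate part; the remaining implications are formal. Closing the cycle $(c)\Rightarrow(a)\Rightarrow(b)\Rightarrow(d)$ and the reverse links established above completes the equivalence.
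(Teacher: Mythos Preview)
Your proof is essentially correct and follows the standard textbook development of this equivalence. However, there is nothing to compare: the paper does not prove this statement at all. It is stated as a \emph{Fact} with an explicit citation to Becker--Kechris \cite{kechris}, Theorem~1.5.1, and is simply quoted for use in the proof of Theorem~\ref{embed_for_cyclic}. So the ``paper's own proof'' you were meant to compare against does not exist here.

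That said, a couple of minor remarks on your argument. In $(a)\Rightarrow(b)$, once you have shown $\pi$ is a continuous open injection onto its image, you know $\pi(G)$ is Polish in the subspace topology; a Polish subgroup of a Polish group is automatically closed (if not, its cosets in its closure would all be dense $G_\delta$, contradicting Baire), so the ``delicate bookkeeping'' you flag can be bypassed. Also, in $(b)\Rightarrow(d)$ you might note explicitly that the language you build is countable (each $\omega^n$ has only countably many $G$-orbits), since ``countable first-order structure'' is sometimes read as requiring a countable language.
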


	We finally prove Theorem \ref{embed_for_cyclic} and Corollary \ref{random_graph}.
	
		\begin{proof}[Proof of Theorem \ref{embed_for_cyclic}] Suppose that $G(\Gamma, \mathfrak{p})$ is embeddable into a Polish group, then by Proposition \ref{propo_closed} there is a separable metric on $\Gamma$ such that $E_{\Gamma}$ is closed in the induced topology. On the other hand, if there is a separable metric $d$ on $\Gamma$ which induces a topology in which $E_{\Gamma}$ is closed, then using Lemma \ref{metric_implies_ultrametric} we can embed $(\Gamma, \mathfrak{p})$ in a coloured graph on  $\omega^\omega$ which satisfies the assumptions of Lemma \ref{lemma_for_th1}, and so using Facts \ref{completion} and \ref{char_non-arch} we are done.
\end{proof}

	\begin{proof}[Proof of Corollary \ref{random_graph}] As well-known, the automorphism group of the random graph embeds $Sym(\omega)$ (this also follows from the main result of \cite{muller}). Furthermore, in \cite{Sh_Pa_Hall} it is proved that the automorphism group of Hall's universal locally finite group embeds $Sym(\omega)$. Thus, by Theorem \ref{embed_for_cyclic} and Fact \ref{char_non-arch} we are done.
\end{proof}

%
%
%

\end{document}